\documentclass[12pt]{amsart}
\usepackage{amsfonts,amssymb,amscd,amsmath,enumerate,verbatim, fancyvrb}
\usepackage[latin1]{inputenc}
\usepackage{amscd}
\usepackage{latexsym}
\usepackage{mathptmx} 
\usepackage{multicol}
\usepackage{epsfig}
\usepackage{color}
\definecolor{cream}{cmyk}{0,0,0.3,0}
\definecolor{orange}{cmyk}{0,0.61,0.87,0}
\definecolor{whiteblue}{cmyk}{0.2,0,0,0}
\definecolor{whitepeach}{cmyk}{0,0.2,0,0}
\definecolor{peach}{cmyk}{0,0.2,0.2,0}
\definecolor{bluegreen}{cmyk}{0.2,0,0.1,0}
\definecolor{whitepurple}{cmyk}{0.3,0.2,0.1,0}

\input xy
\xyoption{all}

%
%
%

%
%
\def\frk{\mathfrak}               

\def\Phi{{\frk N}}
%
%


%
\def\opn#1#2{\def#1{\operatorname{#2}}} 
%
\opn\chara{char} 
\opn\length{\ell} 
\opn\pd{pd} 
\opn\rk{rk}
\opn\projdim{proj\,dim} 
\opn\injdim{inj\,dim} 
\opn\rank{rank}
\opn\depth{depth} 
\opn\grade{grade} 
\opn\height{height}
\opn\embdim{emb\,dim} 
\opn\codim{codim}

\opn\Tr{Tr} 
\opn\bigrank{big\,rank}
\opn\superheight{superheight}
\opn\lcm{lcm}
\opn\trdeg{tr\,deg}
\opn\reg{reg} 
\opn\lreg{lreg} 
\opn\ini{in} 
\opn\lpd{lpd}
\opn\size{size}
\opn\mult{mult}
\opn\dist{dist}
\opn\cone{cone}
\opn\lex{lex}
\opn\rev{rev}
%
\opn\div{div} \opn\Div{Div} \opn\cl{cl} \opn\Cl{Cl}
%
%
\opn\Spec{Spec} \opn\Supp{Supp} \opn\supp{supp} \opn\Sing{Sing}
\opn\Ass{Ass} \opn\Min{Min}
%
%
\opn\Ann{Ann} \opn\Rad{Rad} \opn\Soc{Soc}
%
%
\opn\Syz{Syz} \opn\Im{Im} \opn\Ker{Ker} \opn\Coker{Coker}
\opn\Am{Am} \opn\Hom{Hom} \opn\Tor{Tor} \opn\Ext{Ext}
\opn\End{End} \opn\Aut{Aut} \opn\id{id} \opn\ini{in}

\opn\nat{nat}
\opn\pff{pf}
\opn\Pf{Pf} \opn\GL{GL} \opn\SL{SL} \opn\mod{mod} \opn\ord{ord}
\opn\Gin{Gin}
\opn\Hilb{Hilb}\opn\adeg{adeg}\opn\std{std}\opn\ip{infpt}
\opn\Pol{Pol}
\opn\sat{sat}
\opn\Var{Var}
\opn\Gen{Gen}

%
%
\opn\aff{aff} \opn\con{conv} \opn\relint{relint} \opn\st{st}
\opn\lk{lk} \opn\cn{cn} \opn\core{core} \opn\vol{vol}
\opn\link{link} \opn\star{star}\opn\indmatch{ind-match}
\opn\gr{gr}


%
%

\def\pot#1#2{#1[\kern-0.28ex[#2]\kern-0.28ex]}

%
%
\opn\dirlim{\underrightarrow{\lim}}
\opn\inivlim{\underleftarrow{\lim}}
%
%
%

%
%
\let\to=\rightarrow

\def\Implies{\ifmmode\Longrightarrow \else
        \unskip${}\Longrightarrow{}$\ignorespaces\fi}
\def\implies{\ifmmode\Rightarrow \else
        \unskip${}\Rightarrow{}$\ignorespaces\fi}
\def\iff{\ifmmode\Longleftrightarrow \else
        \unskip${}\Longleftrightarrow{}$\ignorespaces\fi}

\let\:=\colon
\newtheorem{Theorem}{Theorem}
\newtheorem{Lemma}[Theorem]{Lemma}

\newtheorem{Proposition}[Theorem]{Proposition}

\newtheorem{Example}[Theorem]{Example}

\newtheorem{Question}[Theorem]{Question}

%
%
\let\epsilon\varepsilon
\let\phi=\varphi
\let\kappa=\varkappa
%
%
\textwidth=15cm \textheight=22cm \topmargin=0.5cm
\oddsidemargin=0.5cm \evensidemargin=0.5cm \pagestyle{plain}
%
%
\def\qed{\ifhmode\textqed\fi
      \ifmmode\ifinner\quad\qedsymbol\else\dispqed\fi\fi}
\def\textqed{\unskip\nobreak\penalty50
       \hskip2em\hbox{}\nobreak\hfil\qedsymbol
       \parfillskip=0pt \finalhyphendemerits=0}
\def\dispqed{\rlap{\qquad\qedsymbol}}

%
\opn\dis{dis}
\opn\height{height}
\opn\dist{dist}
\def\pnt{{\raise0.5mm\hbox{\large\bf.}}}

\opn\Lex{Lex}

%


%
%
%
\begin{document}
\title{Extremal Betti numbers of edge ideals}
\author{Takayuki Hibi, Kyouko Kimura and Kazunori Matsuda}
\address{Takayuki Hibi,
Department of Pure and Applied Mathematics,
Graduate School of Information Science and Technology,
Osaka University, Suita, Osaka 565-0871, Japan}
\email{hibi@math.sci.osaka-u.ac.jp}

\address{Kyouko Kimura,
Department of Mathematics, 
Faculty of Science, 
Shizuoka University, 836 Ohya, Suruga-ku, Shizuoka 422-8529, Japan}
\email{kimura.kyoko.a@shizuoka.ac.jp}

\address{Kazunori Matsuda,
Kitami Institute of Technology, 
Kitami, Hokkaido 090-8507, Japan}
\email{kaz-matsuda@mail.kitami-it.ac.jp}

\subjclass[2010]{05E40, 13H10}
\keywords{edge ideal, Castelnuovo--Mumford regularity, extremal Betti number}
\begin{abstract}
Given integers $r$ and $b$ with  $1 \leq b \leq r$, a finite simple connected graph $G$ for which $\reg(S/I(G)) = r$ and the number of extremal Betti numbers of $S/I(G)$ is equal to $b$ will be constructed. 
\end{abstract}
\maketitle

Let $S = K[x_1, \ldots, x_n]$ denote the polynomial ring in 
$n$ variables over a field $K$ with each $\deg x_i = 1$ and 
$I \subset S$ a homogeneous ideal.  
Let
\[
{\bf F}_{S/I} : 0 \to \bigoplus_{j \geq 1} S\left(-(p+j)\right)^{\beta_{p, p + j}(S/I)} \to \cdots \to \bigoplus_{j \geq 1} S\left(-(1+j)\right)^{\beta_{1, 1 + j}(S/I)} \to S \to S/I \to 0
\]
be the minimal graded free resolution of $S/I$ over $S$, where $p = \projdim (S/I)$ is the projective dimension of $S/I$ 
and $\beta_{i, i + j}(S/I)$ is the $(i, i + j)$-th graded Betti number of $S/I$. 
The ({\em Castelnuovo--Mumford}\,) {\em regularity} of $S/I$ is  
\[
\reg\left(S/I\right) = \max\{ \, j \, : \, \beta_{i, i + j} (S/I) \neq 0 \, \}.
\] 
A graded Betti number $\beta_{i, i + j}(S/I) \neq 0$ is said to be {\em extremal} 
(\cite[Definition 4.3.13]{HH}) if $\beta_{k, k + \ell}(S/I) = 0$ 
for all pairs $(k, \ell) \neq (i, j)$ with $k \geq i$ and $\ell \geq j$. 
Extremal Betti numbers of graded algebras 
have been studied, for example, in \cite{BCP, MoMo}. 
In general, $S/I$ has the unique extremal Betti number if and only if 
$\beta_{p, p + r}(S/I) \neq 0$, where $r = \reg (S/I)$. 
This equivalent properties are hold 
if $S/I$ is Cohen-Macaulay (\cite[Lemma 3]{BiHe}). 
Moreover a Cohen-Macaulay graded algebra $S/I$ is Gorenstein 
if and only if $\sum_{j} \beta_{p, p + j}(S/I) =  \beta_{p, p + r}(S/I) = 1$. 

\par
Let $G$ be a finite simple graph (i.e. a graph with no loop and no multiple edge) on the vertex set 
$V(G) = \{x_{1}, x_{2}, \ldots, x_{n}\}$ with $E(G)$ its edge set.
The {\em edge ideal} of $G$ is
\[
I(G) = \left( \, x_{i}x_{j}\, : \,  \{x_{i}, x_{j}\} \in E(G) \, \right) \subset S = K[V(G)] = K[x_{1}, x_{2}, \ldots, x_{n}]. 
\]

\par
In the present paper, given integers $r$ and $b$ with $1 \leq b \leq r$, the existence of a finite simple connected graph $G$ for which $\reg(S/I(G)) = r$ and the number of extremal Betti numbers of $S/I(G)$ is equal to $b$ will be shown. 

\begin{Theorem}
\label{ExtremalBettiRegularity}
  Let $r,b$ be integers with $1 \leq b \leq r$. 
  Then there exists a finite simple connected graph $G_{r,b}$ with 
  $\reg K[V(G_{r,b})]/I (G_{r,b}) = r$ such that 
  the number of extremal Betti numbers of $K[V(G_{r,b})]/I(G_{r,b})$ 
  is equal to $b$. 
\end{Theorem}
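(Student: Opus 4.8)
The plan is to engineer $G_{r,b}$ so that the Stanley--Reisner complex of $I(G_{r,b})$ --- equivalently the independence complex $\Delta_{r,b}$ of $G_{r,b}$ --- is homotopy equivalent to a wedge $S^{d_1}\vee\cdots\vee S^{d_b}$ of $b$ spheres with $0\le d_1<d_2<\cdots<d_b=r-1$, taking moreover $d_1=0$ when $b\ge2$; such dimensions exist exactly because $1\le b\le r$. The whole computation is then run through Hochster's formula
\[
\beta_{i,i+j}\bigl(S/I(G_{r,b})\bigr)=\sum_{W\subseteq V(G_{r,b}),\, |W|=i+j}\dim_K\widetilde H_{j-1}\bigl(\Delta_{r,b}|_W;K\bigr).
\]

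For the construction, recall that $m$ pairwise disjoint edges $mK_2$ give the complete intersection $S/I(mK_2)$, whose independence complex is the boundary $\partial O_m$ of the $m$-dimensional cross-polytope, so $\partial O_m\cong S^{m-1}$. For $b\ge2$ set $k_t=d_t+1$, take graphs $G_1,\dots,G_b$ isomorphic to $k_1K_2,\dots,k_bK_2$ sharing exactly one common vertex $w$ (one vertex of each) and otherwise on disjoint vertex sets, and let $G_{r,b}$ be $G_1\cup\cdots\cup G_b$ with every edge between $V(G_s)\setminus\{w\}$ and $V(G_t)\setminus\{w\}$ ($s\ne t$) added. These cross edges make $G_{r,b}$ connected, while a clique of $\overline{G_{r,b}}$ cannot meet two distinct sets $V(G_t)\setminus\{w\}$; hence the faces of $\Delta_{r,b}$ are precisely the faces of the $\partial O_{k_t}$, overlapping only in subsets of $\{w\}$, so $\Delta_{r,b}=\partial O_{k_1}\cup_{\{w\}}\cdots\cup_{\{w\}}\partial O_{k_b}\simeq S^{d_1}\vee\cdots\vee S^{d_b}$. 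For $b=1$ the analogous graph would be disconnected, so instead take $G_{r,1}=rK_2$ plus one extra vertex $u$ joined to one endpoint of each of the $r$ edges; this is connected and $\Delta_{r,1}=\partial O_r\cup_F\Delta^{F\cup\{u\}}$ is $\partial O_r$ with a single facet $F$ capped by a cone, hence homotopy equivalent to $\partial O_r\cong S^{r-1}$. (One could equally take any connected Cohen--Macaulay graph of regularity $r$, by the introduction.)

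The verification has three parts. First, $\Delta_{r,b}$ is $(r-1)$-dimensional --- its facets are those of the $\partial O_{k_t}$, the largest with $k_b=r$ vertices, and in the case $b=1$ the unique $r$-face $F\cup\{u\}$ is a simplex carrying no $r$-cycle --- so $\widetilde H_\ell(\Delta_{r,b}|_W)=0$ for all $\ell\ge r$ and all $W$, whereas $\widetilde H_{r-1}(\Delta_{r,b})=K$; thus $\reg(S/I(G_{r,b}))=r$. Second, with $n=|V(G_{r,b})|$ and $W=V(G_{r,b})$, each wedge summand $S^{d_t}$ contributes $K$ to $\beta_{n-1-d_t,\,n}$, so the $b$ positions $(n-1-d_t,\,d_t+1)$, $t=1,\dots,b$, carry nonzero Betti numbers and are pairwise incomparable (homological index decreasing, row index increasing in $t$). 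Each is extremal: if $\beta_{k,k+\ell}\ne0$ with $k\ge n-1-d_t$ and $\ell\ge d_t+1$, then some $W$ with $|W|=k+\ell$ has $\widetilde H_{\ell-1}(\Delta_{r,b}|_W)\ne0$, forcing $k+\ell\le n$; combined with $k+\ell\ge(n-1-d_t)+(d_t+1)=n$ this gives $k+\ell=n$, hence $W=V(G_{r,b})$, $\ell=d_t+1$ and $k=n-1-d_t$, so $(k,\ell)$ is the position itself. Third, there are no other extremal Betti numbers: every nonzero $\beta_{k,k+\ell}$ has $k+\ell\le n$, and each induced subcomplex $\Delta_{r,b}|_W$ is a wedge at $w$ (or disjoint union, if $w\notin W$) of complexes $\partial O_{k_t}|_W$, and $\partial O_{k_t}|_W$ is either contractible or a sphere $S^{s-1}$ coming from the $s$ complete pairs of $\partial O_{k_t}$ retained in $W$; tracking the largest $W$ that yields nonzero $\widetilde H_{\ell-1}$ shows that the rightmost nonzero entry in row $\ell$ sits at homological index $n-\ell$ when $\ell-1\in\{d_1,\dots,d_b\}$ (the candidate above) and strictly to the left of some candidate otherwise. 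The one delicate point is row $\ell=1$: deleting $w$ disconnects $\Delta_{r,b}$ into contractible pieces and produces a nonzero entry at homological index $n-2$, which is dominated by the candidate $(n-1,1)$ precisely because $d_1=0$. Hence $S/I(G_{r,b})$ has exactly the claimed $b$ extremal Betti numbers.

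The main obstacle is this last step: a complete case analysis of $\widetilde H_\bullet(\Delta_{r,b}|_W)$ over all $W$, guaranteeing that no accidental extremal Betti number appears --- in particular the linear-strand phenomenon that forces $d_1=0$ when $b\ge2$ --- together with the separate, but entirely analogous, verification for $b=1$.
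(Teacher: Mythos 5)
Your route is genuinely different from the paper's. The paper builds $G_{r,b}$ from a star of $r$ triangles with apex $z$ by successively attaching vertices $w_1,\dots,w_{b-1}$, keeps everything chordal, and reads off the nonvanishing Betti numbers from Kimura's strongly-disjoint-bouquet criterion (Proposition \ref{NonVanishingBetti}), with an induction on $r-b$ to kill the unwanted corners; you instead prescribe the homotopy type of the independence complex (a one-point union of cross-polytope boundaries $\partial O_{k_1},\dots,\partial O_{k_b}$) and run Hochster's formula. Amusingly, your $b=1$ graph coincides with the paper's $G_{r,1}$, and in both constructions all $b$ extremal Betti numbers sit on the antidiagonal $i+j=|V|$, with one of them forced into the linear strand (your $d_1=0$ corresponds to the paper's corner $\beta_{2r+b-1,2r+b}$). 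Your construction, the computation of $\reg=r$, and the argument that each candidate position $(n-1-d_t,\,d_t+1)$, once nonzero, is automatically extremal because $m_\ell\le n-\ell$ for every row $\ell$, are all correct.

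However, as you yourself flag, the decisive step --- that no \emph{other} extremal Betti number appears --- is asserted rather than proved, and this is exactly where the work lies (it is the analogue of the inductive vanishing argument occupying most of Lemma \ref{GbrExtremal}). The good news is that your structural observation closes it in a few lines for $b\ge 2$: for $j\ge 2$, $\widetilde H_{j-1}(\Delta_{r,b}|_W)\ne 0$ forces some piece $X_s=\partial O_{k_s}|_{W\cap V(G_s)}$ to be a sphere $S^{j-1}$, hence $W\cap V(G_s)$ consists of exactly $j$ full pairs of $G_s$ with $k_s\ge j$, whence $|W|\le n-2(k_s-j)$ and $m_j\le n+j-2k_t$ with $k_t=\min\{k_s:k_s\ge j\}$; since $j\le k_t$ with equality iff $j$ is a candidate row, every non-candidate entry is strictly dominated by the candidate $(n-k_t,\,k_t)$ in the strictly higher row $k_t$, and row $1$ is handled by your $d_1=0$ choice. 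You should write this out, and you should also supply the $b=1$ verification explicitly: it is \emph{not} "entirely analogous," because $G_{r,1}$ is not of the one-point-union form (the extra vertex $u$ is adjacent to one endpoint of every edge, so induced subcomplexes containing $u$ need a separate Mayer--Vietoris or domination argument, e.g.\ controlling row $1$ via $|N[u]|=r+1$). Until those two verifications are written down, the proof is a correct and workable plan rather than a complete argument.
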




In order to prove Theorem \ref{ExtremalBettiRegularity}, 
we use the non-vanishing theorem 
for Betti numbers \cite[Theorem 4.1]{Kimura12}. 
As before, we recall fundamental materials on graph theory 
which is needed to understand the theorem. 

\par
Let $G$ be a finite simple graph on the vertex set $V$ and $W$ a subset of $V$.  The induced subgraph of $G$ on $W$ is the subgraph $G_W$ of $G$ with the vertex set $V(G_W) = W$ and with the edge set $E(G_W) = \big\{ \{x_i, x_j\} \in E(G) \, : \, x_i, x_j \in W \, \big\}$.  A finite simple graph $G$ is {\em chordal} if every cycle in $G$ of length $> 3$ has a chord. 
A subset $M$ of $E(G)$ is a {\em matching} of $G$ if, for any $e, e' \in M$ with $e \neq e'$, one has $e \cap e' = \emptyset$. 
A matching $M$ is called an {\em induced matching} of $G$ if, for any $e, e' \in M$ with $e \neq e'$, there is no $e'' \in E(G)$ satisfying both of $e \cap e'' \neq\emptyset$ and $e' \cap e'' \neq\emptyset$. 
The {\em induced matching number} $\indmatch(G)$ of $G$ is the maximum size of induced matching of $G$. 
A complete bipartite graph of type $(1,d)$ is called a {\em bouquet}. 
Let $\mathcal{B} = \{ B_1, \ldots, B_s \}$ be a set of bouquets, where 
$B_i$ is a subgraph of $G$, and set $V(\mathcal{B}) = V(B_1) \cup \cdots \cup V(B_s)$. 
We say that $\mathcal{B}$ is a {\em strongly disjoint set of bouquets} of $G$
if $V(B_k) \cap V(B_{\ell}) = \emptyset$ for all $k \neq \ell$ 
and if, for each $1 \leq k \leq s$, there exists $e_k \in E(B_k)$ such that $\{ e_1, \ldots, e_s \}$ forms an induced matching of $G$. 
When $\mathcal{B}$ is a strongly disjoint set of bouquets, we define the type of $\mathcal{B}$ as 
$(|V(\mathcal{B})| - s, s)$. 
Finally, we say that $G$ contains a strongly disjoint set of bouquets of type $(i,j)$ if there exists a strongly disjoint set of bouquets $\mathcal{B}$ of $G$ whose type is $(i,j)$ and which satisfies $V(\mathcal{B}) = V(G)$.

\begin{Proposition}[{\cite[Theorem 4.1]{Kimura12}}]
  \label{NonVanishingBetti}
  Let $G$ be a finite simple graph on $V$ and $S=K[V]$. 
  Suppose that $G$ is chordal. 
  Then $\beta_{i, i+j} (S/I(G)) \neq 0$ if and only if there exists a subset 
  $W$ of $V$ such that the induced subgraph $G_W$ contains a strongly disjoint 
  set of bouquets of type $(i,j)$. 
\end{Proposition}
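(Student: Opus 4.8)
The plan is to translate the statement into topology via Hochster's formula and then treat the two implications separately: the \emph{if} direction is elementary and holds for every graph, while chordality is needed only for the \emph{only if} direction. Recall that $I(G)$ is a squarefree monomial ideal whose Stanley--Reisner complex is the independence complex $\operatorname{Ind}(G)$, whose faces are the independent sets of $G$, and that for $W \subseteq V$ the full subcomplex on $W$ is $\operatorname{Ind}(G)_W = \operatorname{Ind}(G_W)$. Hochster's formula \cite{HH} then gives
\[
\beta_{i,i+j}(S/I(G)) = \sum_{W \subseteq V,\, |W| = i+j} \dim_K \widetilde{H}_{j-1}(\operatorname{Ind}(G_W); K).
\]
Since a strongly disjoint set of bouquets of type $(i,j)$ covering $W$ forces $|W| = i+j$, both sides of the asserted equivalence concern the single complex $\operatorname{Ind}(G_W)$, and it suffices to match the condition $\widetilde{H}_{j-1}(\operatorname{Ind}(G_W)) \neq 0$ with the existence, inside $G_W$, of $j$ covering bouquets having $i$ leaves whose marked edges form an induced matching.

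For sufficiency (no chordality required), suppose $\mathcal{B} = \{B_1,\dots,B_j\}$ is strongly disjoint of type $(i,j)$ with $V(\mathcal{B}) = W$, and let $e_k = \{c_k, \ell_k\} \in E(B_k)$ be the marked edges. Deleting from $G_W$ every edge except the bouquet edges produces $H = B_1 \sqcup \cdots \sqcup B_j$, so $\operatorname{Ind}(H) = \operatorname{Ind}(B_1) * \cdots * \operatorname{Ind}(B_j)$. Each $\operatorname{Ind}(B_k)$ is a simplex together with one isolated point, hence $\widetilde{H}_0(\operatorname{Ind}(B_k)) \neq 0$, and the join formula yields a nonzero class in $\widetilde{H}_{j-1}(\operatorname{Ind}(H))$ represented by $z = (c_1 - \ell_1) * \cdots * (c_j - \ell_j)$. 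Every face of $z$ selects one endpoint from each $e_k$, and by the induced-matching hypothesis such a set is independent in $G_W$; thus $z$ is a chain of the subcomplex $\operatorname{Ind}(G_W) \subseteq \operatorname{Ind}(H)$. A cycle that fails to bound in the larger complex cannot bound in the smaller one, so $\widetilde{H}_{j-1}(\operatorname{Ind}(G_W)) \neq 0$ and $\beta_{i,i+j}(S/I(G)) \neq 0$.

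For necessity I would induct on $|V(G)|$. Reduce first to $G = G_W$, which is again chordal, and note that $\widetilde{H}_* \neq 0$ forces $G$ to have no isolated vertex, since an isolated vertex cones off $\operatorname{Ind}(G)$. A chordal graph has a simplicial vertex $x$, so that $N[x]$ is a clique, and the decomposition $\operatorname{Ind}(G) = \operatorname{Ind}(G\setminus x) \cup \big(x * \operatorname{Ind}(G\setminus N[x])\big)$ has a cone as its second piece, giving the reduced Mayer--Vietoris sequence
\[
\cdots \to \widetilde{H}_{j-1}(\operatorname{Ind}(G\setminus x)) \to \widetilde{H}_{j-1}(\operatorname{Ind}(G)) \to \widetilde{H}_{j-2}(\operatorname{Ind}(G\setminus N[x])) \to \cdots.
\]
If $\widetilde{H}_{j-1}(\operatorname{Ind}(G\setminus x)) \neq 0$, the inductive hypothesis supplies $j$ covering bouquets of $G\setminus x$, and one reattaches $x$ as a new leaf of a bouquet centred at a neighbour of $x$, raising the leaf count by one. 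If instead $\widetilde{H}_{j-2}(\operatorname{Ind}(G\setminus N[x])) \neq 0$, the hypothesis supplies $j-1$ covering bouquets of $G\setminus N[x]$, and one appends a $j$-th bouquet that is a spanning star of the clique $N[x]$ (any vertex may serve as centre), again with the correct leaf count.

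The main obstacle is the final recombination step in each case: one must verify that \emph{strong disjointness} survives, i.e.\ that the marked edge $e_j$ of the new bouquet can be chosen so that $\{e_1,\dots,e_j\}$ is still an induced matching of $G$. The endpoints available for $e_j$ lie in $N[x]$, the earlier marked edges lie in $G\setminus N[x]$, and a priori a leaf of the new bouquet could be adjacent to an endpoint of some $e_\ell$. Resolving this is precisely where chordality must be used in earnest: I expect one must choose the simplicial vertex $x$ carefully and strengthen the inductive statement so that the marked edges can be steered away from $N[x]$, exploiting that $N[x]$ is a clique and that clique separators in a chordal graph are well behaved. This induced-matching bookkeeping, rather than the homological framework, should be the technical heart of the proof.
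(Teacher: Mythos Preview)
The paper does not prove this proposition; it is quoted from \cite{Kimura12} and used as a black box, so there is no proof in the paper to compare your attempt against.

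Assessing your sketch on its own: the \emph{if} direction is correct and is the standard argument. The cross-cycle $z=(c_1-\ell_1)*\cdots*(c_j-\ell_j)$ lies in $\operatorname{Ind}(G_W)$ by the induced-matching hypothesis, and the inclusion $\operatorname{Ind}(G_W)\hookrightarrow\operatorname{Ind}(H)$ carries it to a generator of $\widetilde H_{j-1}(\operatorname{Ind}(H))\cong K$, so it cannot bound below.

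The \emph{only if} direction has a genuine gap, exactly where you locate it. In the first Mayer--Vietoris branch you must attach $x$ as a leaf of an existing bouquet, which forces some \emph{centre} $c_k$ to lie in $N(x)$; nothing in the induction excludes the situation in which every vertex of the clique $N(x)$ occurs only as a leaf. In the second branch you must pick a marked edge $e_j\subset N[x]$ keeping $\{e_1,\dots,e_j\}$ induced; taking $x$ as one endpoint is harmless since all its neighbours lie in $N[x]$, but the other endpoint must be a vertex of $N(x)$ with no neighbour among the endpoints of $e_1,\dots,e_{j-1}$, and such a vertex need not exist. Both obstructions are produced by edges running between $N(x)$ and $V\setminus N[x]$, which neither of the two induced subgraphs in your recursion can see, so a plain strengthening of the inductive statement will not obviously repair them. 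Making this work requires processing an entire perfect elimination ordering so that the bouquets are built coherently rather than patched one simplicial vertex at a time; that bookkeeping, not the Hochster/Mayer--Vietoris framework, is the substance of the argument in \cite{Kimura12}.
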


\smallskip

We now turn to a proof of Theorem \ref{ExtremalBettiRegularity}.

\smallskip

\noindent 
{\bf (First Step)} Let $1 = b \leq r$ and $G_{r, 1}$ the graph consisting 
of $r$ paths $P_{3}$ with the common vertex $z$, that is, 
\begin{eqnarray*}
V(G_{r, 1}) &:=& \{ x_{1}, \ldots, x_{r} \} \cup \{ y_{1}, \ldots, y_{r} \} \cup \{ z \}, \\
E(G_{r, 1}) &:=& \big\{ \{ z, y_{i} \} : i = 1, \ldots, r \big\} \cup \big\{ \{ x_{i}, y_{i} \} : i = 1, \ldots, r \big\}. 
\end{eqnarray*}
Note that the graph $G_{r, 1}$ is a tree, in particular a chordal graph. 
Hence we can apply Proposition \ref{NonVanishingBetti}. 

\par
For each $i = 1, \ldots, r - 1$, let $B_i$ be the induced subgraph 
of $G_{r,1}$ on $\{x_{i}, y_{i}\}$. 
Also let $B_r$ be the induced subgraph of $G_{r,1}$ on $\{x_{r}, y_{r}, z\}$. 
Then $\mathcal{B} = \{ B_1, \ldots, B_r \}$ is a strongly disjoint set 
of bouquets of $G_{r, 1}$ of type $(r+1, r)$ 
(notice that $\{x_{i}, y_{i}\} \in E(B_{i})$ $(i = 1, \ldots, r)$ 
form an induced matching of $G_{r, 1}$). 
Therefore, by virtue of Proposition \ref{NonVanishingBetti}, 
one has 
\begin{displaymath}
  \beta_{r+1, (r+1) + r}(K[V(G_{r,1})]/I(G_{r, 1})) \neq 0 
\end{displaymath}
and this is the only extremal Betti number of $K[V(G_{r,1})]/I(G_{r, 1})$. 
It then also follows that $\reg(K[V(G_{r,1})]/I(G_{r, 1})) = r$.


%

\smallskip

\noindent 
{\bf (Second Step)} 
Let $r$ and $b$ be integers with $2 \leq b \leq r$. 
We then introduce the finite simple connected graphs $G_{r,j}$ for $j= 2, \ldots, b$ constructed as follows. 
Starting with the star triangle consisting of 
$r$ triangles with the common vertex $z$, 
we define $G_{r,2}$ by adding a new vertex $w_1$ joining 
with all vertices of a triangle, say, $z, x_1, y_1$, to it. 
Thus 
\begin{displaymath}
  \begin{aligned}
    V(G_{r,2}) &:= 
      \{ x_1, \ldots, x_r \} \cup \{ y_1, \ldots, y_r \} 
      \cup \{ z \} \cup \{ w_1 \}, \\
    E(G_{r,2}) &:= 
      \big\{ \{ z, x_i \} \; : \; i = 1, \ldots, r \big\} 
       \cup \big\{ \{ z, y_i \} \; : \; i = 1, \ldots, r \big\} 
       \cup \big\{ \{ x_i, y_i \} \; : \; i = 1, \ldots, r \big\} \\
      &\qquad 
      \cup \big\{ \{ w_1, x_1 \}, \; \{ w_1, y_1 \}, \; \{ w_1, z \} \big\}. 
  \end{aligned}
\end{displaymath}
Inductively, we define the finite simple connected graph $G_{r,j+1}$ for each $j = 2, \ldots, b-1$ 
by adding a new vertex $w_{j}$ joining with vertices 
\begin{displaymath}
  z, \; x_1, \ldots, x_{j}, \; y_1, \ldots, y_{j}, \; w_1, \ldots, w_{j-1} 
\end{displaymath}
to $G_{r, j}$. In other words,
\begin{displaymath}
  \begin{aligned}
    V(G_{r,j+1}) &:= 
      \{ x_1, \ldots, x_r \} \cup \{ y_1, \ldots, y_r \} 
      \cup \{ z \} \cup \{ w_1, \ldots, w_{j} \}, \\
    E(G_{r,j+1}) &:= E(G_{r,j}) \cup \big\{ \{ w_{j}, z \} \big\} 
      \cup \big\{ \{ w_{j}, w_i \} \; : \; i = 1, \ldots, j-1 \big\} \\
     &\qquad \cup \big\{ \{ w_{j}, x_i \} \; : \; i = 1, \ldots, j \big\} 
      \cup \big\{ \{ w_{j}, y_i \} \; : \; i = 1, \ldots, j \big\}. 
  \end{aligned}
\end{displaymath}


Then $G_{r,b}$ ($2 \leq b \leq r$) has the following properties. 
\begin{Lemma}
  \label{Gbr}
  Let $2 \leq b \leq r$ and $S = K[V(G_{r,b})]$. Then 
  \begin{enumerate}
  \item[{\rm (1)}] the graph $G_{r,b}$ is chordal\,{\rm ;} 
  \item[{\rm (2)}] $\reg (S/I(G_{r,b})) = r$\,{\rm ;}
  \item[{\rm (3)}] $\projdim (S/I(G_{r,b})) = 2r+b-1$\,{\rm ;}
  \end{enumerate}
\end{Lemma}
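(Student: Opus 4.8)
The plan is to establish the three assertions in turn, all through the combinatorial description of (non)vanishing Betti numbers given by Proposition \ref{NonVanishingBetti}, using two structural features of $G_{r,b}$ that are immediate from the construction: the vertex $z$ is adjacent to every other vertex of $G_{r,b}$, and the vertices $w_1,\dots,w_{b-1}$ are pairwise adjacent, with $w_j$ adjacent to $x_i$ and to $y_i$ precisely when $i\le j$. For part (1) I would exhibit a perfect elimination ordering: list the vertices as
\[
x_1,y_1,x_2,y_2,\dots,x_r,y_r,\ w_1,\dots,w_{b-1},\ z .
\]
When $x_i$ (resp.\ $y_i$) is deleted, the neighbours of $x_i$ (resp.\ $y_i$) that still remain are $z$, possibly its partner, and $w_i,\dots,w_{b-1}$, and these are pairwise adjacent, hence form a clique; after all the $x$'s and $y$'s have been removed the residual graph on $\{w_1,\dots,w_{b-1},z\}$ is complete. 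Thus the listing is a perfect elimination ordering and $G_{r,b}$ is chordal.

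For part (2) I would argue via Proposition \ref{NonVanishingBetti} together with the identity $\indmatch(G_{r,b})=r$. The lower bound $\reg\ge r$ is obtained by taking, for $i=1,\dots,r$, the bouquet $B_i$ equal to the single edge $\{x_i,y_i\}$; since no two of $x_i,y_i,x_j,y_j$ with $i\neq j$ are adjacent, $\{e_1,\dots,e_r\}$ with $e_i=\{x_i,y_i\}$ is an induced matching, so $\mathcal{B}=\{B_1,\dots,B_r\}$ is a strongly disjoint set of bouquets of type $(r,r)$ in the induced subgraph on $\{x_1,\dots,x_r,y_1,\dots,y_r\}$, whence $\beta_{r,2r}(S/I(G_{r,b}))\neq 0$. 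For the upper bound, in any strongly disjoint set of bouquets the second coordinate of the type is the number $s$ of bouquets, and $e_1,\dots,e_s$ form an induced matching of $G_{r,b}$; so $\reg(S/I(G_{r,b}))\le\indmatch(G_{r,b})$, and it remains to prove $\indmatch(G_{r,b})\le r$. Let $M$ be an induced matching. If some edge of $M$ is incident to $z$, then since $z$ is adjacent to everything, any further edge $\{a,b\}$ of $M$ would be joined to it by $\{z,a\}$, forcing $|M|=1$. Otherwise, since the $w_j$ are pairwise adjacent, at most one edge $e$ of $M$ meets $\{w_1,\dots,w_{b-1}\}$; writing $e$ incident to $w_{j_0}$ with $j_0\ge 1$, every remaining edge of $M$ avoids $z$ and all $w_j$, hence has the form $\{x_i,y_i\}$, and necessarily $i>j_0$ (for $i\le j_0$ the edge $\{w_{j_0},x_i\}$ joins $e$ to $\{x_i,y_i\}$), so $|M|\le 1+(r-j_0)\le r$; and if $M$ meets neither $z$ nor any $w_j$, then $M$ consists of distinct edges $\{x_i,y_i\}$, so again $|M|\le r$. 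This yields $\reg(S/I(G_{r,b}))=r$.

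For part (3) I would once more apply Proposition \ref{NonVanishingBetti}. For the lower bound, let $B_1$ be the star with centre $z$ and with the remaining $2r+b-1$ vertices as leaves; because $z$ is adjacent to all of them, $B_1$ is a bouquet in $G_{r,b}$, and $\mathcal{B}=\{B_1\}$ is a strongly disjoint set of bouquets (a single edge is an induced matching) with $V(\mathcal{B})=V(G_{r,b})$ of type $(2r+b-1,1)$, so $\beta_{2r+b-1,\,2r+b}(S/I(G_{r,b}))\neq 0$ and $\projdim(S/I(G_{r,b}))\ge 2r+b-1$. For the upper bound, suppose $\beta_{i,i+j}(S/I(G_{r,b}))\neq 0$ with $i\ge 1$; then Proposition \ref{NonVanishingBetti} supplies a strongly disjoint set of bouquets $\mathcal{B}=\{B_1,\dots,B_s\}$ in some induced subgraph $G_W$ with $V(\mathcal{B})=W$ and type $(i,j)$, and since $i\ge 1$ we have $s\ge 1$, so $i=|W|-s\le |V(G_{r,b})|-1=2r+b-1$. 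Hence $\projdim(S/I(G_{r,b}))=2r+b-1$.

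The one place requiring genuine work is the inequality $\indmatch(G_{r,b})\le r$ used in part (2): it is the only step that demands an explicit case analysis on the edge set of $G_{r,b}$, whereas the chordality, the regularity lower bound, and both projective dimension bounds follow quickly from the universality of $z$, the clique structure on $\{w_1,\dots,w_{b-1}\}$, and the bouquet characterization of Proposition \ref{NonVanishingBetti}.
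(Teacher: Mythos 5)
Your proof is correct, and in a couple of places it takes a different (and somewhat more self-contained) route than the paper. For chordality, the paper argues by induction on $j$, showing that a chordless cycle of length $>3$ in $G_{r,j+1}$ would have to pass through the newly added vertex $w_j$ and then deriving a contradiction from the structure of $N_{G_{r,j+1}}[w_j]$; you instead exhibit the perfect elimination ordering $x_1,y_1,\dots,x_r,y_r,w_1,\dots,w_{b-1},z$, which is a clean one-shot verification using the clique on $\{w_i,\dots,w_{b-1},z,y_i\}$. For the regularity, the paper invokes the H\`a--Van Tuyl equality $\reg(S/I(G))=\indmatch(G)$ for chordal $G$ and then computes $\indmatch(G_{r,b})=r$; you obtain the lower bound from the explicit strongly disjoint family $\{x_i\}\sqcup\{y_i\}$ of type $(r,r)$ and the upper bound by observing that Proposition \ref{NonVanishingBetti} forces any nonzero $\beta_{i,i+j}$ to produce an induced matching of size $j$, so the two approaches meet in the same essential computation $\indmatch(G_{r,b})\le r$, which you carry out by the same case analysis as the paper (an edge through $z$ gives $|M|=1$; otherwise at most one edge meets the pairwise adjacent $w_j$'s and the rest are edges $\{x_i,y_i\}$ with $i>j_0$). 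For the projective dimension your lower bound via the bouquet $\{z\}\sqcup(V\setminus\{z\})$ is exactly the paper's argument, and you make explicit the upper bound $i=|W|-s\le |V(G_{r,b})|-1$, which the paper leaves implicit. What your route buys is independence from the H\`a--Van Tuyl citation (everything reduces to Proposition \ref{NonVanishingBetti}) and a constructive certificate of chordality; what the paper's inductive chordality proof buys is that it tracks the graphs $G_{r,j}$ one at a time, which matches the inductive construction.
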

\begin{proof}
%
%
  \begin{enumerate}
  \item Let $j$ be an integer with $1 \leq j \leq b-1$. 
    We prove that $G_{r,j+1}$ is chordal under the hypothesis that 
    $G_{r,j}$ is chordal. 
    Here, for abuse notation, we use $G_{r, 1}$ to denote 
    the star triangle consisting of $r$ triangles, 
    obviously it is a chordal graph. 
    Suppose that there exists a cycle $C$ of $G_{r,j+1}$ 
    of length $> 3$ with no chord. 
    Since $G_{r,j}$ is chordal, it follows that $C$ must contain $w_j$. 
    If $C$ contains a vertex which does not belong to $N_{G_{r, j+1}} [w_j]$, 
    where $N_{G_{r, j+1}} [w_j] = N_{G_{r, j+1}} (w_j) \cup \{w_{j}\}$ 
    and $N_{G_{r, j+1}} (w_j)$ is the neighbourhood of $w_{j}$ in $G_{r, j+1}$, 
    then $C$ must be a triangle $z x_k y_k$ for some $k \geq j+1$. 
    This is a contradiction. 
    Thus all vertices of $C$ must belong to $N_{G_{r,j+1}} [w_j]$. 
    Since $C$ contains $w_j$, this contradicts the hypothesis that 
    $C$ does not have a chord. 
  \item Since $G_{r,b}$ is chordal, the regularity of $S/I(G_{r,b})$ 
    is equal to the induced 
    matching number $\indmatch(G_{r,b})$ of $G_{r,b}$ (\cite[Corollary 6.9]{HaVanTuyl}). 
    Since $\{ x_1, y_1 \}, \ldots, \{ x_r, y_r \}$ form an induced matching, 
    one has $\indmatch(G_{r,b}) \geq r$. 
    Conversely, let $\mathcal{M}$ be an induced matching of $G_{r,b}$ which is 
    different from the above one. 
    If one of edges in $\mathcal{M}$ contains $z$, 
    then $|\mathcal{M}| = 1$ because 
    $N_{G_{r,b}} [z] = V(G_{r,b})$. 
    Otherwise, one of edges in $\mathcal{M}$ contains $w_j$ for some $j$. 
    Then the number of edges of $\mathcal{M}$ contained in $G_{N[w_j]}$ 
    is $1$. The other edges of $\mathcal{M}$ must be contained in 
    $\{ \{ x_{j+1}, y_{j+1} \}, \ldots, \{ x_r, y_r \} \}$. 
    It then follows that $|\mathcal{M}| \leq r-j+1 \leq r$. 
    Hence $\indmatch(G_{r,b}) = r$, as required. 
  \item Since $G_{r,b}$ is chordal, 
    we can apply Proposition \ref{NonVanishingBetti}. 
    One has $|V(G_{r,b})| = 2r+b$ and the bipartition 
    $\{ z \} \sqcup (V(G_{r,b}) \setminus \{ z \})$ of the vertex set 
    defines a bouquet of type $(2r+b-1,1)$. 
    Hence $\beta_{2r+b-1, (2r+b-1)+1} (S/I(G_{r,b})) \neq 0$ and  
    $\projdim (S/I(G_{r,b})) = 2r+b-1$, as desired. \qed
  \end{enumerate}
\end{proof}

Finally we investigate the extremal Betti numbers of 
$K[V(G_{b,r})]/I(G_{r,b})$. 
The following lemma completes (Second Step) of our proof. 
\begin{Lemma}
  \label{GbrExtremal}
  Let $2 \leq b \leq r$ and $S = K[V(G_{r,b})]$. 
  Then the extremal Betti numbers of $S/I(G_{r,b})$ are
  \[
  \beta_{r+b+i-1, (r+b+i-1)+(r-i+1) } (S/I(G_{r,b})), \, \, \, \, \, \, \, \, \, \, i = 1, 2, \ldots, b - 1, 
  \]
and \[ \beta_{2r+b-1, (2r+b-1)+1 } (S/I(G_{r,b})). \] 
\end{Lemma}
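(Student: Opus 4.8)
The plan is to use the non-vanishing criterion of Proposition~\ref{NonVanishingBetti} in both directions: first to exhibit, for each claimed pair $(i,i+j)$, an induced subgraph $G_W$ of $G_{r,b}$ containing a strongly disjoint set of bouquets of that type, so that the corresponding Betti number is nonzero; and second to verify that each of these Betti numbers is in fact \emph{extremal}, i.e.\ that no Betti number $\beta_{k,k+\ell}$ with $(k,\ell)$ dominating the given pair (in the sense $k\ge i$, $\ell\ge j$) is nonzero. Since $\reg(S/I(G_{r,b}))=r$ by Lemma~\ref{Gbr}(2), we already know $\beta_{k,k+\ell}=0$ whenever $\ell>r$, so the only candidates for extremal position are concentrated in the columns $\ell=1,2,\ldots,r$, and within a fixed column we must locate the largest homological degree $k$ in which a nonzero Betti number occurs.

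First I would treat the top corner $\beta_{2r+b-1,(2r+b-1)+1}$: this is exactly the bouquet of type $(2r+b-1,1)$ produced in the proof of Lemma~\ref{Gbr}(3) (the star with center $z$ spanning all of $V(G_{r,b})$), it sits in column $\ell=1$ and in the maximal homological degree $k=\projdim(S/I(G_{r,b}))=2r+b-1$, so it is automatically extremal. Next, for each $i=1,\ldots,b-1$ I would construct a strongly disjoint set of bouquets of type $(r+b+i-1,\,r-i+1)$ whose vertex set is all of some induced subgraph $G_W$. The natural choice: take the induced matching edges $\{x_{i+1},y_{i+1}\},\ldots,\{x_r,y_r\}$ (that is $r-i$ single-edge bouquets giving $r-i$ to the second coordinate), and then absorb the remaining "thick" part of the graph on the vertices $z,w_1,\ldots,w_{b-1},x_1,y_1,\ldots,x_i,y_i$ into bouquets so as to contribute exactly one more to the second coordinate (making the total $r-i+1$) while using up all of those vertices in the first coordinate. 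One must check that these bouquets are strongly disjoint, that their chosen edges together with the $\{x_k,y_k\}$ ($k>i$) form an induced matching (the key local fact is that $z$ is adjacent to everything and each $w_j$ is adjacent to $z,w_1,\ldots,w_{j-1},x_1,\ldots,x_j,y_1,\ldots,y_j$, so the $x_k,y_k$ with $k>j$ are the only vertices "invisible" to $w_j$), and that the type is $(|V(\mathcal{B})|-s,s)=(2r+b-1-(r-i+1)+\,?\,,\,r-i+1)$; a bookkeeping of vertex counts should give first coordinate $r+b+i-1$. This furnishes the non-vanishing of every claimed Betti number.

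The harder half is proving \emph{extremality}, i.e.\ maximality in each column. I would argue that for a fixed value $j=\ell$ of the regularity coordinate with $2\le \ell\le r$, any subset $W\subseteq V(G_{r,b})$ for which $G_W$ contains a strongly disjoint set of bouquets of type $(i,j)$ must satisfy $i\le r+b+(r-j+1)-1$ — equivalently, once we demand an induced matching of size $j$ inside $G_W$ using the $j$ distinguished edges, the remaining vertices available to pad the bouquets are bounded above. The point is that an induced matching of size $\ell\ge 2$ in $G_{r,b}$ cannot use $z$ (as $N[z]=V$) and can use at most one edge meeting the "$w$-cone," so after selecting $\ell$ induced-matching edges, the vertices of $W$ not on those edges that can still be attached to a bouquet are limited; carefully counting the worst case (use one edge touching $w_{b-1}$ and the edges $\{x_k,y_k\}$ for $k$ in a size-$(\ell-1)$ subset of $\{?,\ldots,r\}$, then absorb the maximal remaining clique-like neighborhood) should pin the maximum first coordinate to exactly $r+b+(r-\ell+1)-1$ attained uniquely, up to relabeling, by the configuration found in the previous paragraph with $i=r-\ell+1$. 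Likewise in column $\ell=1$ the maximum homological degree is $\projdim=2r+b-1$. Assembling: the nonzero Betti numbers $\beta_{k,k+\ell}$ are confined to columns $1\le\ell\le r$, in each such column the top one is nonzero and no strictly dominating pair is nonzero, and one checks directly that the $b$ listed pairs are precisely the pairwise non-dominating "staircase corners" — concretely, comparing $(r+b+i-1,\,r-i+1)$ for $i=1,\ldots,b-1$ and $(2r+b-1,1)$, the first coordinate is strictly increasing while the second is strictly decreasing, so none dominates another and each is extremal. The main obstacle I anticipate is the clean combinatorial upper bound in the extremality step: bounding, for each fixed regularity column, the largest homological degree carrying a nonzero Betti number, which amounts to a tight analysis of how large a strongly disjoint family of bouquets on an induced subgraph can be when its induced-matching "core" has a prescribed size in this particular graph.
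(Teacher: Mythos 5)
Your non-vanishing constructions are essentially the paper's: for each $i=1,\ldots,b-1$ the paper takes the single ``thick'' bouquet $\{w_i\}\sqcup(\{z,x_1,\ldots,x_i,y_1,\ldots,y_i,w_1,\ldots,w_{b-1}\}\setminus\{w_i\})$ together with the edges $\{x_{i+1},y_{i+1}\},\ldots,\{x_r,y_r\}$, exactly as you sketch, and your vertex bookkeeping does come out to type $(r+b+i-1,r-i+1)$. You also miss an easy shortcut for the extremality of each individual listed Betti number: every listed pair satisfies $k+\ell=(r+b+i-1)+(r-i+1)=2r+b=|V(G_{r,b})|$, and $\beta_{k,k+\ell}=0$ whenever $k+\ell>|V(G)|$, so once nonzero these Betti numbers are automatically extremal --- no per-column analysis is needed for that half.

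The genuine gap is in your completeness argument. You claim that for every column $2\le\ell\le r$ the largest homological degree carrying a nonzero Betti number is $2r+b-\ell$ and that this top entry is nonzero, ``attained by the configuration with $i=r-\ell+1$.'' But those configurations only exist for $i=1,\ldots,b-1$, i.e.\ for $\ell\ge r-b+2$; for $2\le\ell\le r-b+1$ the antidiagonal entry $\beta_{2r+b-\ell,\,2r+b}$ is in fact \emph{zero}. (Check the paper's Betti table for $G_{5,3}$: rows $\ell=2,3$ both terminate at $k=8=r+2b-2$, not at $k=11$ and $k=10$.) If your claimed column maxima were correct, all $r-1$ antidiagonal entries in columns $2,\ldots,r$ plus the corner $(2r+b-1,1)$ would be pairwise non-dominating and nonzero, yielding $r$ extremal Betti numbers rather than $b$ --- contradicting the statement you are trying to prove. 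What is actually required, and what your proposal never formulates or proves, is the vanishing
\[
\beta_{r+2b-2+i,\,(r+2b-2+i)+j}\bigl(S/I(G_{r,b})\bigr)=0,\qquad 1\le i\le r-b,\ \ 2\le j\le r-b-i+2,
\]
which forces the tops of columns $2\le\ell\le r-b+1$ down to homological degree at most $r+2b-2$, where they are dominated by the corner $(r+2b-2,r-b+2)$. The paper proves this by induction on $r-b$, via a two-case analysis of a hypothetical strongly disjoint set of bouquets of type $(r+2b-2+i,j)$: either some edge $\{x_k,y_k\}$ with $k\ge b$ is absent from the induced matching, in which case the set lives in $G_{r-1,b}$ and the inductive hypothesis forces $|V(\mathcal{B})|\ge 2r+b-1$, contradicting $x_k,y_k\notin V(\mathcal{B})$; or all such edges are present, forcing $j=r-b+1$, $i=1$ and a vertex-count contradiction. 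Your proposal contains no substitute for this step, and it is the heart of the lemma.
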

\begin{proof}
  We first note that, in general, 
  $\beta_{k, k+\ell} (S/I(G)) = 0$ when $k + \ell > |V(G)|$. 
  In our case, since
  $|V(G_{r,b})| = 2r+b = (r+b+i-1) + (r-i+1)$, 
  if 
  \[
    \beta_{r+b+i-1, (r+b+i-1) + (r-i+1)} (S/I(G_{r,b})) \neq 0,
  \]
  then it follows that this Betti number is 
  extremal. We have already proven that 
  \[
    \beta_{2r+b-1, (2r+b-1)+1} (S/I(G_{r,b})) \neq 0
  \]
  in the proof of Lemma \ref{Gbr} (3). 
  We prove 
  \[
    \beta_{r+b+i-1, 2r+b} (S/I(G_{r,b})) 
    = \beta_{r+b+i-1, (r+b+i-1) + (r-i+1)} (S/I(G_{r,b})) \neq 0
  \] 
  for $i=1, 2, \ldots, b-1$. 

  \par
  For $i = 1, 2, \ldots, b-1$, consider the following set 
  $\mathcal{B}_{i} = \{ B_1^{(i)}, \ldots, B_{r-i+1}^{(i)} \}$ of bouquets 
  (we describe a bouquet by its vertex bipartition): 
  \begin{displaymath}
    \begin{aligned}
      B_1^{(i)} &: \{ w_i \} \sqcup N_{G_{r,b}} (w_i) \\
                &\quad = \{ w_i \} \sqcup 
          (\{ z, \; x_1, \ldots, x_{i}, \; y_1, \ldots, y_i, \; 
              w_1, \ldots, w_{b-1} \} \setminus \{ w_i \}), \\
      B_{k+1}^{(i)} &: \{ x_{i+k} \} \sqcup \{ y_{i+k} \}, 
          \quad k = 1, 2, \ldots, r-i. 
    \end{aligned}
  \end{displaymath}
  Since 
  \begin{displaymath}
    \{ w_i, x_i \} \in E(B_1^{(i)}), \; 
    \{ x_{i+1}, y_{i+1} \} \in E(B_{2}^{(i)}), \ldots, 
    \{ x_r, y_r \} \in E(B_{r-i+1}^{(i)}) 
  \end{displaymath}
  form an induced matching of $G_{r,b}$, 
  it follows that $\mathcal{B}_i$ is a strongly disjoint set 
  of bouquets contained in $G_{r,b}$ of type $(r+b+i-1, r-i+1)$. 
  Proposition \ref{NonVanishingBetti} says
  $\beta_{r+b+i-1, 2r+b} (S/I(G_{r,b})) \neq 0$. 

  \par
  To complete the proof, 
  our work is to show that 
  \[
    \beta_{r+2b-2 + i, (r+2b-2+i)+j} (S/I(G_{r,b})) = 0
  \] 
  for $i = 1, 2, \ldots, r-b$ and  $j = 2, 3, \ldots, r-b-i+2$, 
  since we know $\beta_{r+2b-2+i, (r+2b-2+i)+j} (S/I(G_{r,b})) = 0$ 
  when $(r+2b-2+i)+j > 2r+b$, i.e., $i + j > r - b + 2$. 

  \par
  We proceed the proof by induction on $r-b$ $(\geq 0)$. 
  When $r-b=0$, there is nothing to prove. 

  \par
  Assume $r-b > 0$. 
  We show that there is no set of bouquets 
  which guarantees the non-vanishing of these Betti numbers 
  in meaning of Proposition \ref{NonVanishingBetti}. 
  On the contrary, suppose that there exists a strongly disjoint set 
  of bouquets $\mathcal{B}$ of $G_{r,b}$ contained in 
  $(G_{r,b})_{V(\mathcal{B})}$ 
  of type $(r+2b-2+i, j)$, 
  where $1 \leq i \leq r-b$ and $2 \leq j \leq r-b-i+2$. 
  Let $\mathcal{B} = \{ B_1, \ldots, B_j \}$ and 
  assume that $e_1, \ldots, e_j$ form an induced matching of $G_{r,b}$ 
  with each $e_{\ell} \in E(B_{\ell})$. 

  \par
  (Step 1) Assume that $\{ x_k, y_k \} \notin \{ e_1, \ldots, e_j \}$
  for some $k$ with $b \leq k \leq r$. 
  In this case, $x_k, y_k \notin V(\mathcal{B})$ because $j \geq 2$. 
  Then $\mathcal{B}$ can be regarded as a strongly disjoint set of bouquets 
  of $G_{r-1, b}$ of 
  \[
    {\rm type} \, (r+2b-2+i, j) = {\rm type} \, ((r-1)+2b-2 + (i+1), j). 
  \]
  Since 
  \begin{displaymath}
    \begin{aligned}
      &\beta_{r-1 + 2b - 2 + \alpha, (r-1 + 2b-2 + \alpha) + \beta} 
         (K[V(G_{r-1,b})]/I(G_{r-1,b})) = 0, \\
      &\qquad \alpha = 1, 2, \ldots, r-1-b; 
         \quad \beta = 2, 3, \ldots, (r-1) - b - \alpha + 2 
    \end{aligned}
  \end{displaymath}
  by inductive hypothesis, the possible pairs $(i,j)$ can be 
  \begin{itemize}
  \item$(i, r-b-i+2)$, $(i, r-b-i+1)$ for $1 \leq i \leq r-b-2$; 
  \item$i=r-b-1$, then $j= 2,3$; 
  \item$i=r-b$, then $j=2$. 
  \end{itemize}
  In each of the three cases, $i+j$ is equal to either $r-b+1$ or $r-b+2$. 
  Hence
  $|V(\mathcal{B})| = (r+2b-2+i)+j$ is equal to either $2r+b-1$ or $2r+b$, 
  which contradicts $x_k, y_k \notin V(\mathcal{B})$. 

  \par
  (Step 2) Assume that $\{ x_k, y_k \} \in \{ e_1, \ldots, e_j \}$ for all  
  $k = b, \ldots, r$. 
  Then $j \geq r-b+1$. Since $j \leq r-b-i+2$ and $i \geq 1$, 
  it follows that $j=r-b+1$ and $i=1$. 
  Hence 
  \begin{displaymath}
    V(\mathcal{B}) \subset 
      \{ z \} \cup \{ x_b, \ldots, x_r \} \cup \{ y_b \ldots, y_r \}. 
  \end{displaymath}
  This contradicts the fact that the type of $\mathcal{B}$ is  
  $(r+2b-2+1, r-b+1)$. 
\end{proof}


\begin{Example}
Let $b = 3$ and $r = 5$. 
The graph $G_{5, 3}$ is the following\,$:$

\bigskip

\begin{xy}
	\ar@{} (0,0);(30, -8) *++!U{x_{1}} *\cir<4pt>{} = "X1";
	\ar@{-} "X1";(40, -8) *++!U{y_{1}} *\cir<4pt>{} = "Y1";
	\ar@{} (0,0);(50, -8) *++!U{x_{2}} *\cir<4pt>{} = "X2";
	\ar@{-} "X2";(60, -8) *++!U{y_{2}} *\cir<4pt>{} = "Y2";
	\ar@{} (0,0);(70, -8) *++!U{x_{3}} *\cir<4pt>{} = "X3";
	\ar@{-} "X3";(80, -8) *++!U{y_{3}} *\cir<4pt>{} = "Y3";
	\ar@{} (0,0);(90, -8) *++!U{x_{4}} *\cir<4pt>{} = "X4";
	\ar@{-} "X4";(100, -8) *++!U{y_{4}} *\cir<4pt>{} = "Y4";
	\ar@{} (0,0);(110, -8) *++!U{x_{5}} *\cir<4pt>{} = "X5";
	\ar@{-} "X5";(120, -8) *++!U{y_{5}} *\cir<4pt>{} = "Y5";
	\ar@{} (0,0);(75, 30) *++!D{z} *\cir<4pt>{} = "Z";
	\ar@{-} "Z";"X1";
	\ar@{-} "Z";"X2";
	\ar@{-} "Z";"X3";
	\ar@{-} "Z";"X4";
	\ar@{-} "Z";"X5";
	\ar@{-} "Z";"Y1";
	\ar@{-} "Z";"Y2";
	\ar@{-} "Z";"Y3";
	\ar@{-} "Z";"Y4";
	\ar@{-} "Z";"Y5";
	\ar@{} (0,0);(45, 12) *++!D{w_{2}} *\cir<4pt>{} = "W2";
	\ar@{-} "W2";"X1";
	\ar@{-} "W2";"X2";
	\ar@{-} "W2";"Y1";
	\ar@{-} "W2";"Y2";
	\ar@{-} "W2";"Z";
	\ar@{} (0,0);(25, 12) *++!D{w_{1}} *\cir<4pt>{} = "W1";
	\ar@{-} "W1";"X1";
	\ar@{-} "W1";"Y1";
	\ar@{-} "W1";"W2";
	\ar@{-} "W1";"Z";
\end{xy}

\bigskip

The Betti table of $K[V(G_{5,3})] / I(G_{5, 3})$ is 

\bigskip

\centering
{\rm 
\begin{BVerbatim}
1  .  .   .   .   .   .   .   .   .  .  . .
. 24 94 248 512 798 925 792 495 220 66 12 1 
.  . 33  86  91  53  18   3   .   .  .  . .
.  .  .  37 100 105  57  18   3   .  .  . . 
.  .  .   .  18  49  49  23   6   1  .  . . 
.  .  .   .   .   3   8   7   2   .  .  . . 
\end{BVerbatim}
}
\end{Example}


\begin{Question}
\label{Q}
It is natural to ask if, given integers $1 \leq b \leq r$ and $b \leq p$, there exists a finite simple connected graph $G_{p,r,b}$ for which
\begin{itemize}
\item
$\reg(K[V(G_{p,r,b})]/I(G_{p,r,b})) = r$\,{\rm ;}
\item
$\projdim(K[V(G_{p,r,b})]/I(G_{p,r,b})) = p$\,{\rm ;}
\item
the number of extremal Betti numbers of $K[V(G_{p,r,b})]/I(G_{p,r,b})$ is equal to $b$.
\end{itemize}
\end{Question}  

\begin{Example}
{\em
Let $1 = b \leq r < p$ and $G_{p, r, 1}$ be the connected graph for which 
\begin{eqnarray*}
V(G_{p, r, 1}) &=& \{ x_{1}, \ldots, x_{p-1} \} \cup \{ y_{1}, \ldots, y_{r} \} \cup \{z\}, \\
E(G_{p, r, 1}) &=& \big\{ \{ z, y_{i} \} : i = 1, \ldots, r \big\} \cup \big\{ \{ x_{i}, y_{i} \} : i = 1, \ldots, r - 1 \big\} \\
& & \qquad  \cup \big\{ \{ x_{j}, y_{r} \} : j = r, \ldots, p - 1 \big\}. 
\end{eqnarray*}
Note that $G_{r+1, r, 1} = G_{r, 1}$, appears in First Step of the proof of Theorem \ref{ExtremalBettiRegularity}. 
Then Proposition \ref{NonVanishingBetti} says that $\beta_{p, p + r}(K[V(G_{p,r,1})]/I(G_{p, r, 1}))$ is the only 
extremal Betti number of $K[V(G_{p,r,1})]/I(G_{p, r, 1})$. 
In particular, 
\begin{itemize}
	\item $\reg (K[V(G_{p,r,1})]/I(G_{r+1, r, 1})) = r$\,{\rm ;}
	\item $\projdim (K[V(G_{p,r,1})]/I(G_{r+1, r, 1})) = p$\,{\rm ;}
	\item the number of extremal Betti numbers of $K[V(G_{p,r,1})]/I(G_{p,r,1})$ is equal to $1 = b$.
\end{itemize}

}
\end{Example}

\medskip

\noindent
{\bf Acknowledgment.}
The authors are grateful to the referee for reading the manuscript carefully. 
The authors were partially supported by JSPS KAKENHI 26220701, 15K17507 and 17K14165.

\medskip

\end{document}